\newtheorem{theorem}{Theorem}[section]
\newtheorem{corollary}[theorem]{Corollary}
\theoremstyle{definition}
\newtheorem{remark}[theorem]{Remark}
\numberwithin{equation}{section}
\newcommand{\ZZ}{\mathbb Z}
\begin{document}

 
\title[Jordan superder and Jordan super-bider of trivial ext]{On Jordan superderivations and Jordan super-biderivations of trivial extensions and triangular matrix rings}
  
\author[Hassan Cheraghpour and Madineh Jafari]{Hassan Cheraghpour$ ^{*} $ and Madineh Jafari} 

\address{University of Primorska, FAMNIT and IAM, Glagolja{\v s}ka 8, 6000 Koper, Slovenia.} 
\email{cheraghpour.hassan@yahoo.com}

\email{madineh.jafari3978@gmail.com}

\thanks{2020 \textit{Mathematics Subject Classification}. Primary: 16S50; Secondary: 16W25, 16W55.}
\keywords{Trivial extension, Triangular matrix ring, Jordan Superderivation, Jordan Super-biderivation.}
\thanks{This work is supported in part by the Slovenian Research Agency (project N1-0210).}
\thanks{$ ^{*} $Corresponding Author: Hassan Cheraghpour}
%
\maketitle
%

\begin{abstract}
Triangular matrix rings are example of trivial extensions. In this article we describe the Jordan superderivations of the trivial extensions and upper triangular matrix rings. We deduce then that any Jordan superderivation of an upper triangular matrix ring, under some conditions, is a derivation, and any Jordan super-biderivation of a trivial extension, and hence an upper triangular matrix ring, is a Jordan biderivation.
\end{abstract}
 
\section{\bf Introduction}
\noindent
Study of Jordan derivations have a great history in algebra and analysis. In 1957, Herstein \cite{Her} proved that every Jordan derivation from a 2-torsion free prime ring into itself is a derivation and that there are no nonzero antiderivations on a prime ring. In 1970, Sinclair \cite{Sin} proved that every continuous linear Jordan derivation on semisimple Banach algebras is a derivation. In 1988, Bre\v{s}ar \cite{Bre} showed that every additive Jordan derivation from a $2$-torsion free semiprime ring into itself is a derivation. In 1998, Zhang \cite{Zhan2} proved that every linear Jordan derivation on nest algebras is an inner derivation. In 2003, Fo\v{s}ner \cite{Fosn} extend Herstein's theorem on Jordan derivations of prime rings to superalgebras. In 2005, Benkovi\v{c} \cite{BenJ} determined Jordan derivations on triangular matrices over commutative rings and proved that every Jordan derivation from the algebra of all upper triangular matrices into its arbitrary bimodule is the sum of a derivation and an antiderivation. In 2006, Zhang and Yu \cite{Zhan} showed that every Jordan derivation of triangular algebras is a derivation, so every Jordan derivation from the algebra of all upper triangular matrices into itself is a derivation. In 2007, Li and Lu \cite{Li} proved that every additive Jordan derivation on reflexive algebras is a derivation which generalized Zhang's result. In 2012, Ghosseiri \cite{Gho} described Jordan left derivations and generalized Jordan left derivations of matrix rings. Somewhat later, Ghahramani \cite{Gha} described Jordan derivations of trivial extensions and showed that every Jordan derivation of trivial extension, under some conditions, is the sum of derivation and an antiderivation. Then, in 2016, Hadj Ahmed \cite{Haj} showed that under certain conditions, any Jordan biderivation of an upper triangular matrix ring is a biderivation.

In 1962, Nagata \cite{Nag} introduced the trivial extensions, and Kitamara \cite{Kit} in 1983 investigated quotient rings of trivial extensions. In 1984, Assem et al. \cite{Ass} represented finite trivial extension algebras, and Hughes and Waschbusch \cite{Hug} in 1993 described the trivial extensions of tilted algebras, and then Ghosseiri \cite{Ghos} described derivations and biderivations of trivial extensions and triangular matrix rings.

Let $R$ be a ring with identity, and, $M$ and $N$ be unitary $R$-bimodules. For each $x,y\in R$, denote the commutator of $x,y$ by $[x,y]=xy-yx$. An additive mapping $d:R\rightarrow M$ is said to be a {\it derivation} if $d(ab)=d(a)b+ad(b)$ for all $a,b\in R$. Let $a\in R$. The mapping $I_a:R\rightarrow R$ given by $I_a(x)=[x,a]$ is easily seen to be a derivation of $R$. $I_a$ is called the {\it inner derivation} induced by $a$. For each $x,y \in R$, define the \textit{Jordan product} of $x,y$ by $x \circ y =xy+yx$. In \cite{Bei}, Beidar et al. defined the {\it Jordan homomorphism} as an additive mapping $\varphi:M\rightarrow N$ satisfying $\varphi(m \circ m')= \varphi(m) \circ \varphi(m')$ for any $ m, m' \in M $. Similarly, we define {\it Jordan R-homomorphism} as an additive mapping $\varphi:M\rightarrow N$ satisfying $\varphi(r \circ m)=r  \circ \varphi(m)$ for all $ r \in R $ and $ m \in M $. An additive mapping $d:R\rightarrow M$ is said to be a {\it Jordan derivation} if $d(a \circ b)=d(a) \circ b+a \circ d(b)$ for all $a,b\in R$. Obviously, every derivation is a Jordan derivation, but not conversely. A biadditive mapping $d:R\times R\rightarrow M$ is called a {\it Jordan biderivation} if it is a Jordan derivation in each component; that is, 
\begin{equation*}
d(x \circ y,z)=d(x,z) \circ y+x \circ d(y,z),
\end{equation*} 
and
\begin{equation*}
d(x,y \circ z)=d(x,y) \circ z+y \circ d(x,z)
\end{equation*}
are fulfilled for all $x,y,z$ in $R$.

Let $ A $ be a ring with identity. Following \cite{Ghah}, $A$ is a \textit{superring} if it is a $ \mathbb{Z}_{2}-$graded ring; this means that there exist additive subgroups $ A_{0} $ and $ A_{1} $ of $ A $ such that $ A=A_{0} \oplus A_{1} $, $ A_{0}A_{0}\subseteq A_{0} $ (this means $ A_{0} $ is a subring of $ A $), $ A_{0}A_{1} \subseteq A_{1} $, $A_{1}A_{0} \subseteq A_{1} $ and $ A_{1}A_{1} \subseteq A_{0}$. We say that $ A_{0} $ is the \textit{even} part, and $ A_{1} $ is the \textit{odd} part of $ A $. If $ a \in A_{i} $, $ i=0 $ or $ i=1 $, then we say that $ a $ is \textit{homogeneous} of \textit{degree} $ i $ and write $ |a|=i $. Let $ A $ be a superring with identity, and let $ i=0$ or $i=1 $. We say that an additive mapping $ d_{i}:A \longrightarrow A $ is a \textit{superderivation} of degree $ i $, if it satisfies $ d_{i}(A_{j}) \subseteq A_{i+j} $ (index modulo 2) and 
\begin{center}
$d_{i}(xy)=d_{i}(x)y+(-1)^{i|x|}xd_{i}(y)$ \ \ \ \ \ \ for all $ x,y \in A_{0} \cup A_{1}$.
\end{center}
A \textit{superderivation} of $ A $ is the sum of a superderivation of degree $ 0 $ and a superderivation of degree $ 1 $.

Following \cite{Wan}, for all $x,y \in A_{0} \cup A_{1}$, define the \textit{Jordan superproduct} of $x,y$ by
\begin{equation*}
x \circ_{s} y=xy+(-1)^{|x||y|}yx. 
\end{equation*}
Note that, if either of  $ x $ or $ y $ in $ \in A_{0} $, then $x \circ_{s} y=x \circ y$. Let $ A $ be a superring with identity, and let $ i=0$ or $i=1 $. We say that an additive mapping $ d_{i}:A \longrightarrow A $ is a \textit{Jordan superderivation} of degree $ i $, if it satisfies $ d_{i}(A_{j}) \subseteq A_{i+j} $ (index modulo 2) and 
\begin{center}
$d_{i}(x \circ_{s} y)=d_{i}(x) \circ_{s} y+(-1)^{i|x|} x \circ_{s} d_{i}(y)$ \ \ \ \ \ \ for all $ x,y \in A_{0} \cup A_{1}$.
\end{center}
A \textit{Jordan superderivation} of $ A $ is the sum of a Jordan superderivation of degree $ 0 $ and a Jordan superderivation of degree $ 1 $.

Following \cite{Fos}, we define a \textit{super-biderivation} of $A$ as a biadditive mapping $ B: A \times A\longrightarrow A $ such that for every $ x_{0} \in A_{0}$, the mappings $ x\longmapsto B(x_{0},x) $ and $ x\longmapsto B(x,x_{0})$ are superderivations of degree $ 0 $, and for every $ x_{1} \in A_{1}$ the mappings $ x\longmapsto B(x_{1},x) $ and $ x\longmapsto \sigma (B(x,x_{1}))$ are superderivations of degree $ 1 $, where $ \sigma : A \longrightarrow A $ is the automorphism such that for any $ a_{0} \in A_{0} $ and $ a_{1} \in A_{1} $ is defined by $ \sigma (a_{0} + a_{1})=a_{0} - a_{1} $. Similarly, we define a \textit{Jordan super-biderivation} of $A$ as a biadditive mapping $ B: A \times A\longrightarrow A $ such that for every $ x_{0} \in A_{0}$, the mappings $ x\longmapsto B(x_{0},x) $ and $ x\longmapsto B(x,x_{0})$ are Jordan superderivations of degree $ 0 $, and for every $ x_{1} \in A_{1}$ the mappings $ x\longmapsto B(x_{1},x) $ and $ x\longmapsto \sigma (B(x,x_{1}))$ are Jordan superderivations of degree $ 1 $, where $ \sigma : A \longrightarrow A $ is the automorphism such that for any $ a_{0} \in A_{0} $ and $ a_{1} \in A_{1} $ is defined by $ \sigma (a_{0} + a_{1})=a_{0} - a_{1} $. This means that $ B: A \times A\longrightarrow A $ is Jordan super-biderivation if it satisfies $ B(A_{i},A_{j}) \subseteq A_{i+j} $ (index modulo 2) such that, for any $ x, y, z \in A_{0} \cup A_{1} $ we have: 
\begin{equation}\label{11}
\;B(x,y\circ_{s}z)=B(x,y)\circ_{s}z+(-1)^{|x||y|}y\circ_{s}B(x,z),
\end{equation}
\begin{equation}\label{12}
B(x\circ_{s}y,z)=x\circ_{s}B(y,z)+(-1)^{|y||z|}B(x,z)\circ_{s}y.
\end{equation}

Let $R$ be a ring with identity, and $M$ be a unitary $R$-bimodule. The {\it trivial extension} $T(R,M)$ of $R$ by $M$ 
is defined to be 
\begin{equation*}
T(R,M)=\{(r,m):r\in R,m\in M\}.
\end{equation*}
It is easy to see that $T(R,M)$ with the componentwise addition and the multiplication given by
\begin{equation*}
(r,m)(r',m')=(rr',rm'+mr')\;\;\;\;\;\;(r,r'\in R;m,m'\in M)
\end{equation*}
is a ring with the multiplicative identity $(1,0)$. By letting
\begin{equation*}
A_{0}=\{(r,0):r\in R\},\;A_{1}=\{(0,m):m\in M\},
\end{equation*}
it is easily verified that $ A=T(R,M) $ is a superring.
\\\indent Let $R$ and $S$ be rings with identity, $M$ be a unitary $(R,S)$-bimodule, and $T=\left(\begin{array}{cc}R&M\\0&S\end{array}\right) $ be the upper triangular matrix ring determined by $R,S$ and $M$ with the usual addition and multiplication of matrices. 

Let $R $, $ S $ and $ M $ be as above. Then one can easily verify that $M$ can be made into a unitary $R\times S$-bimodule via the scalar multiplications given by 
\begin{equation}\label{13}
(r,s)m=rm\;\;\;\mbox{and}\;\;\;m(r,s)=ms\;\;\;\;\;((r,s)\in R\times S,m\in M).
\end{equation}
Hence, one observes that $T(R\times S,M)$ is the trivial extension of $R\times S$ by $M$. Now, it is straightforward to show that the mapping
\begin{equation}\label{14}
T\rightarrow T(R\times S,M)\;\;\;\mbox{given by}\;\;\;\left(\begin{array}{cc}r&m\\0&s\end{array}\right)\mapsto ((r,s),m)
\end{equation} 
is a ring isomorphism. Therefore, upon the (natural) identification above, the upper triangular matrix rings are examples of trivial extensions, as claimed above. Since it turns out that the trivial extensions are easier to work with, one can prove a property for an upper triangular matrix ring via proving the same property for the corresponding trivial extension.

In this paper first we determine the structure of Jordan superderivations and Jordan super-biderivations of $T(R,M)$ (Theorems \ref{2.1} and \ref{2.6}), and then, following the procedure explained above, we determine the structure of Jordan superderivations of $T$ (Theorem \ref{2.3}). We deduce then that any Jordan superderivation of an upper triangular matrix ring is just a derivation, and any Jordan super-biderivation of a trivial extension, and hence an upper triangular matrix ring, is a Jordan biderivation. 

In the sequel, all rings are assumed to be unital, and unless there is a doubt of ambiguity, the zero elements of rings and modules, and zero functions are all denoted by $0$. As usual, $E_{ij}$ stands for the standard matrix unit.

\section{\bf Main results and proofs}
Let $R$ be a ring with identity, $M$ be unitary $R$-bimodule and $T(R,M)$ be the trivial extension of $R$ by $M$. Our first result describes the Jordan superderivations of $T(R,M)$.

\begin{theorem}\label{2.1} 
Let $d_{0}$ and $ d_{1} $ be, respectively, Jordan superderivations of degree $ 0 $ and $ 1 $ of the trivial extension $T(R,M)$. Then there exists
\begin{enumerate}[\normalfont (i)]
\item a Jordan derivation $\delta$ of $R$,
\item a Jordan derivation $\gamma :R\rightarrow M$,
\item a bimodule Jordan homomorphism $f:M\rightarrow R$ satisfying 
\begin{equation}\label{21}
m \circ f(m')=f(m) \circ m' \;\;\; (m,m'\in M),
\end{equation}
\item and there exists an additive mapping $g$ on $M$ satisfying 
\begin{equation}\label{22}
g(r \circ m)=r \circ g(m)+\delta (r) \circ m \;\;\; (r \in R, m \in M),
\end{equation}
\end{enumerate}
such that $ d_{0} $ and $ d_{1} $ can be expressed as
\begin{equation*}
d_{0}(r,m)=(\delta (r),g(m)) \;\;\;\mbox{and}\;\;\;d_{1}(r,m)=(f(m),\gamma (r)).
\end{equation*}

Hence, every Jordan superderivation of $T(R,M)$ is of the form
\begin{equation*}
d(r,m)=(\delta (r)+f(m),g(m)+\gamma (r)),
\end{equation*}
where $ \delta$, $f$, $g$ and $ \gamma $ are given above.

\end{theorem}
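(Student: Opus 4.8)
The plan is to exploit the $\mathbb{Z}_{2}$-grading of $A=T(R,M)$, for which $A_{0}=\{(r,0):r\in R\}$ and $A_{1}=\{(0,m):m\in M\}$, together with the crucial feature that the product of any two odd elements vanishes, since $(0,m)(0,m')=(0,0)$.

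\smallskip

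\noindent\textit{Step one: the block shape of $d_{0}$ and $d_{1}$.} Since $d_{0}$ preserves degrees, $d_{0}(A_{0})\subseteq A_{0}$ and $d_{0}(A_{1})\subseteq A_{1}$, so there are additive maps $\delta\colon R\to R$ and $g\colon M\to M$ with $d_{0}(r,0)=(\delta(r),0)$ and $d_{0}(0,m)=(0,g(m))$; additivity gives $d_{0}(r,m)=(\delta(r),g(m))$. Since $d_{1}$ shifts degrees, $d_{1}(A_{0})\subseteq A_{1}$ and $d_{1}(A_{1})\subseteq A_{0}$, so there are additive maps $\gamma\colon R\to M$ and $f\colon M\to R$ with $d_{1}(r,0)=(0,\gamma(r))$ and $d_{1}(0,m)=(f(m),0)$, hence $d_{1}(r,m)=(f(m),\gamma(r))$. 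Adding, a general Jordan superderivation $d=d_{0}+d_{1}$ has the form claimed in the final display of the theorem, and it remains to pin down the four conditions on $\delta,\gamma,f,g$.

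\smallskip

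\noindent\textit{Step two: feed homogeneous pairs into the two defining identities.} For $d_{0}$ (degree $0$, no sign): the pair $((r,0),(r',0))$, using $(r,0)\circ_{s}(r',0)=(r\circ r',0)$, yields $\delta(r\circ r')=\delta(r)\circ r'+r\circ\delta(r')$, so $\delta$ is a Jordan derivation of $R$, which is item~(i); the pair $((r,0),(0,m))$, using $(r,0)\circ_{s}(0,m)=(0,r\circ m)$, yields $g(r\circ m)=r\circ g(m)+\delta(r)\circ m$, which is \eqref{22}, item~(iv); and the pair $((0,m),(0,m'))$ gives the trivial identity $0=0$, since every term is a product of two odd elements and hence vanishes. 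For $d_{1}$ (degree $1$, sign $(-1)^{|x|}$): $((r,0),(r',0))$ yields $\gamma(r\circ r')=\gamma(r)\circ r'+r\circ\gamma(r')$, so $\gamma$ is a Jordan derivation $R\to M$, item~(ii); $((r,0),(0,m))$, after discarding the vanishing odd-odd term $(0,\gamma(r))\circ_{s}(0,m)=0$, yields $f(r\circ m)=r\circ f(m)$, so $f$ is a bimodule Jordan homomorphism, the first part of item~(iii); and $((0,m),(0,m'))$, again discarding vanishing odd-odd products and keeping the sign $(-1)^{1}=-1$, yields $f(m)\circ m'-m\circ f(m')=0$, i.e.\ \eqref{21}. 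One should also run the reversed pairs $((0,m),(r,0))$ through both identities, but these merely reproduce the same relations.

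\smallskip

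I do not anticipate a genuine obstacle; the argument is essentially a careful computation. The two things to watch are the $\mathbb{Z}_{2}$-signs appearing in $\circ_{s}$ and in the degree-$1$ Jordan-superderivation identity, and the systematic use of $(0,m)(0,m')=0$, which is exactly what collapses the odd-odd evaluations to the single clean identity \eqref{21} (and to no extra constraint in the degree-$0$ case). If in addition one wants the converse --- that any quadruple $(\delta,\gamma,f,g)$ satisfying (i)--(iv) defines a Jordan superderivation of $T(R,M)$ --- it follows by running the same computations backwards.
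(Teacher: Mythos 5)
Your proposal is correct and follows essentially the same route as the paper: decompose $d_{0},d_{1}$ via the grading constraint $d_{i}(A_{j})\subseteq A_{i+j}$ into $\delta,\gamma,f,g$, then evaluate the degree-$0$ and degree-$1$ Jordan superderivation identities on the homogeneous pairs $((r,0),(r',0))$, $((r,0),(0,m))$ and $((0,m),(0,m'))$, using that odd-odd products vanish to obtain (i), (ii), \eqref{21} and \eqref{22}. The signs and the collapse of the odd-odd evaluations are handled exactly as in the paper's proof.
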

\begin{proof}
Let $r\in R$ and set $d_{0}(r,0)=(\delta(r),0)$ and $ d_{1}(r,0)=(0,\gamma(r)) $. Since $d_{0}$ and $ d_{1} $ are additive, so are $\delta$ and $\gamma$. For any $r,r'\in R$ we have
\begin{align*}
(\delta(r \circ r'),0)&=d_{0}(r \circ r',0)=d_{0}((r,0) \circ_{s} (r',0))\\&=d_{0}(r,0) \circ_{s} (r',0)+(r,0) \circ_{s} d_{0}(r',0)\\
&=(\delta(r),0) \circ (r',0)+(r,0) \circ (\delta(r'),0)\\&=(\delta(r) \circ r',0)+(r \circ \delta(r'),0)\\
&=(\delta(r) \circ r'+r \circ \delta(r'),0).
\end{align*}
Therefore, $\delta$ is a Jordan derivation of $ R $. A similar argument shows that $\gamma:R \longrightarrow M$ is a Jordan derivation. This proves (i) and (ii). To prove (iii) and (iv), let $m\in M$ be arbitrary and set $d_{0}(0,m)=(0,g(m))$, $d_{1}(0,m)=(f(m),0)$. Obviously, $f$ and $g$ are additive. Let $r\in R$. Using (i) and (ii), we have
\begin{align*}
(0,g(r \circ m))&=d_{0}(0,r \circ m)=d_{0}((r,0) \circ_{s} (0,m))\\&=d_{0}(r,0)\circ_{s}(0,m)+(r,0)\circ_{s}d_{0}(0,m)\\
&=(\delta(r),0)\circ(0,m)+(r,0)\circ(0,g(m))\\
&=(0,\delta(r)\circ m)+(0,r\circ g(m))\\&=(0,r\circ g(m)+\delta(r)\circ m), \; \mbox{and}
\end{align*}
\begin{align*}
(f(r\circ m),0)&=d_{1}(0,r\circ m)=d_{1}((r,0)\circ_{s}(0,m))\\&=d_{1}(r,0)\circ_{s}(0,m)+(r,0)\circ_{s}d_{1}(0,m)\\
&=(0,\gamma(r))\circ(0,m)+(r,0)\circ(f(m),0)\\
&=(0,0)+(r\circ f(m),0)\\&=(r\circ f(m),0).
\end{align*}
Therefore, $g$ satisfies \eqref{22} and $ f $ is a Jordan $ R-$homomorphism. To prove \eqref{21}, let $m,m'$ be in $M$. Applying $d_{1}$ to $(0,m) \circ_{s}(0,m')=(0,0)$, we have
\begin{align*}
(0,0)&=d_{1}((0,m)\circ_{s}(0,m'))\\&=d_{1}(0,m)\circ_{s}(0,m')-(0,m)\circ_{s}d_{1}(0,m')\\
&=(f(m),0)\circ(0,m')-(0,m)\circ(f(m'),0)\\&=(0,f(m)\circ m')-(0,m\circ f(m'))\\&=(0,f(m)\circ m'-m\circ f(m')).
\end{align*}
Since $ d_{0} $ and $d_{1}$ are additive, using the foregoing results, for all $r\in R$, $m\in M$ we have 
\begin{equation*}
d_{0}(r,m)=(\delta (r),g(m)) \;\;\;\mbox{and}\;\;\; d_{1}(r,m)=(f(m),\gamma (r)).
\end{equation*}
\end{proof}

In \cite{Ghah}, Ghahramani et al. posed four questions about superderivations on $ \ZZ_{2}-$graded rings, and answered some of these questions in the context of quaternion rings and the upper triangular matrix rings $ T_{n}(R) $, where $ R $ is a ring with identity and $ n \geqslant 2 $. Their second \textbf{question} is: 
\\

\textit{Under what conditions on a $ \ZZ_{2}-$graded ring, every superderivation is a derivation?}
\\

This question suggests an analogue question on Jordan superderivations of $ \ZZ_{2}-$graded rings: 
\\

\textbf{Question.} \textit{Under what conditions on a $ \ZZ_{2}-$graded ring, every Jordan superderivation is a Jordan derivation?}
\\

The following corollary determines a condition on the $ \ZZ_{2}-$graded ring $ T(R,M) $ under which this ring satisfies the property described in the question above. 

\begin{corollary}\label{2.2}
If the bimodule $ M $ is 2-torsion $($so that $ M $ is also a $ \ZZ_{2}-$vector space$)$, then the mapping $ d_{1}:T(R,M) \longrightarrow T(R,M) $ given by $ d_{1}(r,m)=(f(m),\gamma(r)) $ is a Jordan derivation, so that $ d=d_{0}+ d_{1}$ is a Jordan derivation of $ T(R,M) $.
\end{corollary}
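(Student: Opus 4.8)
The plan is to exploit the fact that, in the superring $T(R,M)$, the odd part satisfies $A_{1}A_{1}=0$, so that the Jordan superproduct and the ordinary Jordan product agree on homogeneous elements; the only discrepancy between the defining identity of a Jordan superderivation of degree $1$ and that of an ordinary Jordan derivation is then a sign $(-1)^{|x|}$, and this sign becomes harmless precisely because $M$ is $2$-torsion.

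First I would record that for all $x,y\in A_{0}\cup A_{1}$ one has $x\circ_{s}y=x\circ y$. The only case needing a remark is $x,y\in A_{1}$: there $x\circ_{s}y=xy-yx$ while $x\circ y=xy+yx$, and both products vanish since $A_{1}A_{1}=0$ in $T(R,M)$. Substituting this into the degree-$1$ identity $d_{1}(x\circ_{s}y)=d_{1}(x)\circ_{s}y+(-1)^{|x|}x\circ_{s}d_{1}(y)$ gives, for homogeneous $x,y$,
\begin{equation*}
d_{1}(x\circ y)=d_{1}(x)\circ y+(-1)^{|x|}x\circ d_{1}(y).
\end{equation*}
When $|x|=0$ this is already the Jordan derivation identity. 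When $|x|=1$, comparison with the Jordan derivation identity $d_{1}(x\circ y)=d_{1}(x)\circ y+x\circ d_{1}(y)$ shows that it suffices to prove $2\,(x\circ d_{1}(y))=0$ for every $x\in A_{1}$ and every homogeneous $y$.

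Next I would verify this by the obvious case split. Write $x=(0,m)\in A_{1}$. If $y\in A_{0}$ then $d_{1}(y)\in A_{1}$, so $x\circ d_{1}(y)\in A_{1}A_{1}=0$. If $y=(0,m'')\in A_{1}$ then $d_{1}(y)=(f(m''),0)\in A_{0}$ and $x\circ d_{1}(y)=(0,\,mf(m'')+f(m'')m)$, which is killed by $2$ since $M$ is $2$-torsion. In both cases $2\,(x\circ d_{1}(y))=0$, so $d_{1}$ satisfies the Jordan derivation identity on all homogeneous pairs; since $d_{1}$ is additive and $\circ$ is biadditive, writing arbitrary elements as sums of their even and odd parts extends the identity to all of $T(R,M)$. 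Thus $d_{1}$ is a Jordan derivation. Finally, running the identical argument for $d_{0}$ — whose defining identity already carries the sign $(-1)^{0\cdot|x|}=1$ — shows that $d_{0}$ is a Jordan derivation with no hypothesis on $M$; hence $d=d_{0}+d_{1}$, a sum of Jordan derivations, is a Jordan derivation of $T(R,M)$.

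The only real obstacle is careful bookkeeping of the sign $(-1)^{|x|}$ and isolating the single term whose vanishing the $2$-torsion assumption is tailored to guarantee, namely $x\circ d_{1}(y)$ for $y$ odd, i.e.\ in the notation of Theorem \ref{2.1} the term $m\circ f(m')$. Alternatively one can bypass the grading language and argue directly from the formula $d_{1}(r,m)=(f(m),\gamma(r))$: expanding $d_{1}\bigl((r,m)\circ(r',m')\bigr)$ and $d_{1}(r,m)\circ(r',m')+(r,m)\circ d_{1}(r',m')$ using that $\gamma$ is a Jordan derivation, $f$ is a Jordan $R$-homomorphism, and \eqref{21}, one finds that the two sides differ exactly by $\bigl(0,\,f(m)\circ m'+m\circ f(m')\bigr)=\bigl(0,\,2(f(m)\circ m')\bigr)$, which is $0$ since $M$ is $2$-torsion.
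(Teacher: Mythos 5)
Your proposal is correct and matches the paper's approach: the paper's proof is a one-line assertion that $d_{1}$ is a Jordan derivation iff $2(m\circ f(m'))=0$, and your computation (in either of your two routes) identifies exactly this obstruction term and kills it with the $2$-torsion hypothesis. The extra details you supply — that $\circ_{s}=\circ$ on homogeneous elements since $A_{1}A_{1}=0$, the extension from homogeneous pairs by biadditivity, and that $d_{0}$ is unconditionally a Jordan derivation — are just the "easy verification" the paper leaves to the reader.
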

\begin{proof}
One can easily verify that $ d_{1} $ is a Jordan derivation of $ T(R,M) $ iff $ 2(m \circ f(m'))=0 $ for all $ m, m' \in M $, which is guaranteed by hypothesis.
\end{proof}

Now, upon the identification given in \eqref{14} and using Theorem \ref{2.1}, we can determine the structure of the Jordan superderivations of the upper triangular matrix ring $T=\left(\begin{array}{cc}R&M\\0&S\end{array}\right)$.

\begin{theorem}\label{2.3} 
Let $d_{0}$ and $ d_{1} $ be, respectively, Jordan superderivations of degree $ 0 $ and $ 1 $ of the upper triangular matrix ring $T$, and the rings $ R,S $ be 2-torsion free. Then there are Jordan derivations $\delta_1$ of $R$, $\delta_2$ of $S$, an additive mapping $g$ on $M$ such that
\begin{equation}\label{23}
g(rm+ms)=rg(m)+\delta_1(r)m+g(m)s+m\delta_2(s) \;\; ( r\in R,\; m\in M,\; s\in S),
\end{equation}
and there exists an element $m^*\in M$ such that, for every $X=\left(\begin{array}{cc}r&m\\0&s\end{array}\right)\in T$ we have 
$$d_{0}(X)=\left(\begin{array}{cc}\delta_1(r)&g(m)\\0&\delta_2(s)\end{array}\right),$$
$$d_{1}(X)=\left(\begin{array}{cc}0&rm^*-m^*s\\0&0\end{array}\right).$$

In particular, every Jordan superderivation of $T$ is automatically a Jordan derivation.
\end{theorem}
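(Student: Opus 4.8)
The plan is to transport the problem to the trivial extension $T(R\times S,M)$ via the ring isomorphism \eqref{14} and then invoke Theorem~\ref{2.1}. Under the identification $\left(\begin{smallmatrix}r&m\\0&s\end{smallmatrix}\right)\leftrightarrow((r,s),m)$, the bimodule $M$ carries the $(R\times S)$-action \eqref{13}, and $d_{0},d_{1}$ become Jordan superderivations of $T(R\times S,M)$ of degrees $0$ and $1$. Theorem~\ref{2.1} then supplies a Jordan derivation $\delta$ of $R\times S$, a Jordan derivation $\gamma:R\times S\to M$, a bimodule Jordan homomorphism $f:M\to R\times S$, and an additive map $g$ on $M$ satisfying \eqref{22}, with $d_{0}((r,s),m)=(\delta(r,s),g(m))$ and $d_{1}((r,s),m)=(f(m),\gamma(r,s))$. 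So the theorem reduces to four assertions: (a) $f\equiv0$; (b) $\delta$ splits as $\delta(r,s)=(\delta_{1}(r),\delta_{2}(s))$ with $\delta_{1},\delta_{2}$ Jordan derivations of $R,S$; (c) $\gamma(r,s)=rm^{*}-m^{*}s$ for $m^{*}:=\gamma(1,0)\in M$; and (d) \eqref{22} turns into \eqref{23}. The feature that makes this available — the only new input over Theorem~\ref{2.1} — is that $T(R\times S,M)$, unlike a general trivial extension, contains the orthogonal idempotents $e=(1,0)$ and $e'=(0,1)$ of $R\times S$, that is (after the identification) the matrix units $E_{11}$ and $E_{22}$.

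For (a): by \eqref{13} one has $e\circ m=em+me=m+0=m$, and since $f$ is a Jordan $(R\times S)$-homomorphism, $f(m)=f(e\circ m)=e\circ f(m)$; writing $f(m)=(p,q)$, the product in $R\times S$ gives $e\circ f(m)=(2p,0)$, so $(p,q)=(2p,0)$, whence $p=0=q$ and $f(m)=0$. For (b): applying the Jordan derivation identity for $\delta$ to $(r,0)\circ(0,s)=0$ yields $ru'+u'r=0$ and $vs+sv=0$ for all $r,s$, where $(u,v)=\delta(r,0)$ and $(u',v')=\delta(0,s)$; taking $r=1$ and $s=1$ and using that $R$ and $S$ are $2$-torsion free forces $\delta(r,0)\in R\times\{0\}$ and $\delta(0,s)\in\{0\}\times S$. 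Writing $\delta_{1},\delta_{2}$ for the resulting coordinate maps and evaluating the Jordan identity on $(r,0)\circ(r',0)=(r\circ r',0)$ shows $\delta_{1}$, and likewise $\delta_{2}$, is a Jordan derivation, while $\delta(r,s)=\delta(r,0)+\delta(0,s)=(\delta_{1}(r),\delta_{2}(s))$.

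For (c): I apply the Jordan derivation identity for $\gamma$ to $(r,0)\circ e=(2r,0)$ and simplify with \eqref{13} to get $2\gamma(r,0)=\gamma(r,0)+rm^{*}$, hence $\gamma(r,0)=rm^{*}$; applying it to $e'\circ e=0$ gives $\gamma(0,1)=-m^{*}$, and applying it to $(0,s)\circ e'=(0,2s)$ gives $\gamma(0,s)=\gamma(0,1)s=-m^{*}s$; adding, $\gamma(r,s)=rm^{*}-m^{*}s$. For (d): in \eqref{22}, \eqref{13} and (b) give $(r,s)\circ m=rm+ms$, $(r,s)\circ g(m)=rg(m)+g(m)s$, and $\delta(r,s)\circ m=\delta_{1}(r)m+m\delta_{2}(s)$, and substituting these is precisely \eqref{23}. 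Reading (a)--(d) back through \eqref{14} yields exactly the stated forms of $d_{0}$ and $d_{1}$.

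It then remains to see that $d=d_{0}+d_{1}$ is a Jordan derivation of $T$. The formula for $d_{1}$ says $d_{1}(X)=XM^{*}-M^{*}X$ with $M^{*}=\left(\begin{smallmatrix}0&m^{*}\\0&0\end{smallmatrix}\right)$, so $d_{1}=I_{M^{*}}$ is an inner derivation, in particular a Jordan derivation. For $d_{0}$: as a Jordan superderivation of degree $0$ it satisfies $d_{0}(X\circ Y)=d_{0}(X)\circ Y+X\circ d_{0}(Y)$ on every homogeneous pair in which $X$ or $Y$ is even (there $X\circ_{s}Y=X\circ Y$ and the sign is $1$), and when both $X,Y$ lie in the odd part $\left(\begin{smallmatrix}0&M\\0&0\end{smallmatrix}\right)$ one has $XY=YX=0$ (equivalently $A_{1}A_{1}=0$ in any trivial extension), so $X\circ Y=0$ and, since $d_{0}$ preserves the odd part, $d_{0}(X)\circ Y=X\circ d_{0}(Y)=0$ as well; hence the Jordan derivation identity holds on all homogeneous pairs and, by biadditivity, everywhere, so $d_{0}$, and therefore $d$, is a Jordan derivation. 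The work is concentrated in the idempotent manipulations of (a)--(c); I do not expect a serious obstacle, the one point to watch being that the $2$-torsion-free hypotheses are needed only for the splitting in (b), while (a), (c), (d) and the final paragraph go through for arbitrary $R,S$.
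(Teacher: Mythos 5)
Your proof is correct and takes essentially the same route as the paper: identify $T$ with $T(R\times S,M)$ via \eqref{14}, invoke Theorem \ref{2.1}, and then use the idempotents $(1,0)$ and $(0,1)$ to show $f=0$, split $\delta$ into $(\delta_1,\delta_2)$, and compute $\gamma(r,s)=rm^*-m^*s$, with \eqref{22} specializing to \eqref{23}. The only differences are minor: you evaluate the Jordan identities on the orthogonality relations $(r,0)\circ(0,s)=0$ and $(0,1)\circ(1,0)=0$ where the paper instead first derives $\delta(1,0)=\delta(0,1)=0$ and $\gamma(1,1)=0$ (your variant of the $\gamma$-step avoids the paper's appeal to $2$-torsion freeness there, which is a small tidy-up), and you spell out the paper's concluding ``straightforward computation'' that $d_0+d_1$ is a Jordan derivation.
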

\begin{proof}
Let us identify $T$ with $T(R\times S,M)$. By Theorem 2.1, there are Jordan derivations $\delta$ of $R\times S$ and $\gamma:R\times S\rightarrow M$ such that, for every $(r,s)\in R\times S$, we have$$d_{0}((r,s),0)=(\delta(r,s),0),$$ and $$d_{1}((r,s),0)=(0,\gamma(r,s)).$$
\indent We claim that, there are Jordan derivations $\delta_1$ of $R$ and $\delta_2$ of $S$ such that
\begin{equation}\label{24}
\delta(r,s)=(\delta_1(r),\delta_2(s))\;\;\;\;\mbox{for all}\;\;r\in R,s\in S.
\end{equation}
To see this, let $\delta(1,0)=(a,b)$. Then, from
\begin{align*}
(2a,2b)&=\delta((1,0) \circ (1,0))\\
&=\delta(1,0) \circ(1,0)+(1,0)\circ \delta(1,0)\\
&=(2a,0)+(2a,0)=(4a,0),
\end{align*}
and 2-torsion freeness of $ R $ and $ S $, it follows that $\delta(1,0)=(0,0)$. Similarly, $\delta(0,1)=(0,0)$. Now let $r$ be in $R$ and assume that $\delta(r,0)=(\delta_1(r),\alpha(r))$. Since $\delta(1,0)=(0,0)$, we have 
\begin{align*}
2(\delta_1(r),\alpha(r))&=2\delta(r,0)=\delta(2r,0)\\
&=\delta((r,0) \circ (1,0))\\
&=\delta(r,0) \circ (1,0)+(r,0)\circ \delta(1,0)\\
&=(2\delta_1(r),0).
\end{align*}
Hence $\alpha(r)=0$. Likewise, there exists a mapping $\delta_2$ on $S$ such that for every $s\in S,\delta(0,s)=(0,\delta_2(s))$. Consequently, for every $r\in R,s\in S$, we have 
$$d_{0}(r,s)=d_{0}(r,0)+d_{0}(0,s)=(\delta_1(r),\delta_2(s)).$$

To see that $\delta_1,\delta_2$ are derivations, let $(r',s')$ be also in $R\times S$. Then 
\begin{align*}
(\delta_1(r\circ r'),\delta_2(s\circ s'))&=\delta(r\circ r',s\circ s')=\delta((r,s)\circ (r',s'))\\
&=\delta(r,s)\circ (r',s')+(r,s)\circ \delta(r',s')\\
&=(\delta_1(r),\delta_2(s))\circ (r',s')+(r,s)\circ (\delta_1(r'),\delta_2(s'))\\
&=(\delta_1(r)\circ r'+r\circ \delta(r'),\delta_2(s)\circ s'+s\circ \delta_2(s')).
\end{align*}
Since $\delta$ is additive, so are $\delta_1,\delta_2$. Therefore, $\delta_1,\delta_2$ are Jordan derivations.

Now, we claim that there exists $m^*\in M$ such that 
\begin{equation}\label{25}
\gamma(r,s)=rm^*-m^*s\;\;\;\;\mbox{for all}\;\;r\in R,s\in S.\end{equation}

Since $\gamma$ is a Jordan derivation, and, $ R $ and $ S $ are 2-torsion free rings, we have $\gamma(1,1)=0$. Put $\gamma(1,0)=m^*$. Then, in view of \eqref{12} and noting that $\gamma(0,1)=-\gamma(1,0)=-m^*$, for every $r\in R,s\in S$, we have 
\begin{align*}
\gamma(2r,0)&=\gamma(r,0)+\gamma(r,0)=\gamma((r,0)(1,0)+(1,0)(r,0))\\
&=\gamma(r,0)(1,0)+(r,0)\gamma(1,0)+\gamma(1,0)(r,0)+(1,0)\gamma(r,0)\\
&=(r,0)m^*+\gamma(r,0)=rm^*+\gamma(r,0).
\end{align*}
Then $ \gamma(r,0)=rm^* $, and
\begin{align*}
\gamma(0,2s)&=\gamma(0,s)+\gamma(0,s)=\gamma((0,1)(0,s)+(0,s)(0,1))\\
&=\gamma(0,1)(0,s)+(0,1)\gamma(0,s)+\gamma(0,s)(1,0)+(0,s)\gamma(0,1)\\
&=-m^*(0,s)+\gamma(0,s)=-m^*s+\gamma(0,s).
\end{align*}
Then $ \gamma(0,s)=-m^*s $. Hence, $\gamma(r,s)=\gamma(r,0)+\gamma(0,s)=rm^*-m^*s$, proving \eqref{25}.

Now, by Theorem \ref{2.1}, there exists a bimodule Jordan $ R \times S $-homomorphism $f:M\rightarrow R\times S$ that satisfies condition \eqref{21}, and an additive mapping $g:M\rightarrow M$ that satisfies condition \eqref{22}. First, we prove that $f=0$: let $m$ be in $M$ and assume that $f(m)=(u,v)$. Using \eqref{13} we have
\begin{align*}
(2u,2v)&=2(u,v)=2f(m)=f(m)+f(m)=f ((1,0)m+m(0,1))\\
&=f(m)(0,1)+(1,0)f(m)=(0,1)(u,v)+(u,v)(1,0)\\
&=(u,0)+(0,v)=(u,v).
\end{align*}
Hence, $u=v=0$.
\\\indent To show that $g$ satisfies \eqref{23}, let $r\in R$, $ s \in S $ and $m\in M$ be arbitrary. Noting that here $M$ is an $R\times S$-bimodule, using \eqref{13} again, we have
\begin{align*}
g(rm+ms)&=g((r,s)m+m(r,s))\\
&=(r,s)g(m)+\delta(r,s)m+g(m)(r,s)+m\delta(r,s)\\
&=rg(m)+\delta_1(r)m+g(m)s+m\delta_2(s).
\end{align*}
\indent Finally, by Theorem \ref{2.1}, the identification described in \eqref{14}, and noting that $f=0$, for for every $ X=\left(\begin{array}{cc}r&m\\0&s\end{array}\right) $ in $ T $ we have 

\begin{align*}
d_{0}(X)=d_{0}((r,s),m)&=(\delta(r,s),g(m))\\&=((\delta_1(r),\delta_2(s)),g(m))
\\&=\left(\begin{array}{cc}\delta_1(r)&g(m)\\0&\delta_2(s)\end{array}\right),
\end{align*}
and
\begin{align*}
d_{1}(X)=d_{1}((r,s),m)&=(f(m),\gamma(r,s))\\&=(0,rm^*-m^*s)
\\&=\left(\begin{array}{cc}0&rm^*-m^*s\\0&0\end{array}\right).
\end{align*}

Now, a striaghtforward computation shows the Jordan superderivation $d = d_{0}+d_{1}$ of $T$ is a Jordan derivation.
\end{proof}

Let $ R $, $ S $ and $ M $ be as above. By the above theorem and \cite [Theorem 2.1] {Zhan}, we have the following interesting corollary: 
\begin{corollary}\label{2.4}
It the bimodule $ M $ is faithful as a left $R$-module and as a right $S$-module. Then, any Jordan superderivation of upper triangular matrix ring $ T $ is a derivation.
\end{corollary}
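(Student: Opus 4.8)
The plan is to reduce the statement to the known description of Jordan derivations of triangular algebras. First I would invoke Theorem \ref{2.3}: its standing hypothesis that $R$ and $S$ be $2$-torsion free is in force, and that theorem tells us that every Jordan superderivation $d=d_{0}+d_{1}$ of $T$ is already a Jordan derivation of $T$ (this is the last assertion of the theorem, read off from the explicit formulas obtained there for $d_{0}$ and $d_{1}$). So it is enough to prove that, under the faithfulness hypothesis on $M$, every Jordan derivation of $T$ is a derivation.

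For the latter I would apply \cite[Theorem 2.1]{Zhan}, which asserts that if a triangular algebra $\mathrm{Tri}(A,M',B)$ has $M'$ faithful both as a left $A$-module and as a right $B$-module, then every Jordan derivation of it into itself is a derivation. Taking $A=R$, $B=S$ and $M'=M$, the upper triangular matrix ring $T=\left(\begin{array}{cc}R&M\\0&S\end{array}\right)$ is precisely such a triangular algebra, and by hypothesis $M$ is faithful on both sides. Composing the two results, $d$ is a derivation, as desired.

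The only delicate point — and the place where a little care rather than computation is needed — is to verify that the framework of \cite[Theorem 2.1]{Zhan} genuinely matches ours: that the faithfulness conditions are exactly those required there, and that the $2$-torsion freeness already assumed for $R$ and $S$ suffices, no further torsion hypothesis being hidden in the cited statement. Once this bookkeeping is confirmed, no additional argument is required, since the corollary follows by simply concatenating Theorem \ref{2.3} with the cited result.
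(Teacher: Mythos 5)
Your proposal is correct and follows exactly the route the paper intends: Corollary \ref{2.4} is obtained by combining Theorem \ref{2.3} (under its standing $2$-torsion freeness hypothesis, every Jordan superderivation of $T$ is a Jordan derivation) with \cite[Theorem 2.1]{Zhan}, which upgrades Jordan derivations of such a triangular ring with $M$ faithful on both sides to derivations. Your cautionary remark about matching the hypotheses of the cited theorem is the only nontrivial checking involved, and the paper performs no additional argument beyond this concatenation.
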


The theorem above has also the following corollary:
\begin{corollary}\label{2.5}
Every Jordan superderivation of degree $ 1 $ of $ T $ is an inner derivation.
\end{corollary}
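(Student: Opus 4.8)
The plan is to read off the generator of the inner derivation directly from the explicit form of $d_1$ furnished by Theorem \ref{2.3}, and then confirm the guess by a one-line computation. By Theorem \ref{2.3}, every Jordan superderivation $d_1$ of degree $1$ of $T$ has the form
\[
d_1\!\left(\begin{array}{cc}r&m\\0&s\end{array}\right)=\left(\begin{array}{cc}0&rm^{*}-m^{*}s\\0&0\end{array}\right)
\]
for some fixed element $m^{*}\in M$ depending only on $d_1$. Since the expression $rm^{*}-m^{*}s$ is exactly what a commutator with the matrix having $m^{*}$ in the upper-right corner produces, the natural candidate is $a:=\left(\begin{array}{cc}0&m^{*}\\0&0\end{array}\right)\in T$, and the assertion to prove is that $d_1=I_{a}$.

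The decisive step is then the direct verification. For an arbitrary $X=\left(\begin{array}{cc}r&m\\0&s\end{array}\right)\in T$, expanding the products gives $Xa=\left(\begin{array}{cc}0&rm^{*}\\0&0\end{array}\right)$ and $aX=\left(\begin{array}{cc}0&m^{*}s\\0&0\end{array}\right)$ --- the entry $m$ plays no role, since in each product it is multiplied by a zero block --- so that
\[
I_{a}(X)=[X,a]=Xa-aX=\left(\begin{array}{cc}0&rm^{*}-m^{*}s\\0&0\end{array}\right)=d_1(X).
\]
Hence $d_1=I_{a}$ is an inner derivation of $T$, as claimed. The same verification can be carried out in the trivial-extension model $T\cong T(R\times S,M)$ of \eqref{14}: with $a=((0,0),m^{*})$ one computes, using the module actions of \eqref{13}, that $[((r,s),m),a]=((0,0),rm^{*}-m^{*}s)$, which is $d_1$ under the identification \eqref{14}.

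I do not anticipate a genuine obstacle here, since all of the structural work has already been done in Theorem \ref{2.3}; what remains is only to recognise $a$ as the right generator and to be careful with the sign convention $[X,a]=Xa-aX$, so that $I_{a}$ comes out equal to $d_1$ and not to $-d_1$ (the choice $a=-\left(\begin{array}{cc}0&m^{*}\\0&0\end{array}\right)$ would reverse the sign). In short, the ``hard part'' is purely bookkeeping.
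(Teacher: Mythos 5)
Your proposal is correct and coincides with the paper's own argument: both take the generator $m^{*}E_{12}$ (your $a$) and verify directly that $I_{a}(X)=Xa-aX=(rm^{*}-m^{*}s)E_{12}=d_{1}(X)$ using the form of $d_{1}$ from Theorem \ref{2.3}. The extra check in the trivial-extension model is harmless but not needed.
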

\begin{proof}
Define $M^*=m^*E_{12}$, and let $I_{M^*}$ be the inner derivation of $T$ induced by $M^*$. Then, for every $X=\left(\begin{array}{cc}r&m\\0&s\end{array}\right)$ in $T$ we have
$$ I_{M^*}(X) =Xm^*E_{12}-m^*E_{12}X=(rm^*-m^*s)E_{12}=d_{1}(X).$$
\end{proof}

Our next aim is to describe the Jordan super-biderivations of the trivial extension $T(R,M)$.
\begin{theorem}\label{2.6}
Let $B$ be a Jordan super-biderivation of the trivial extension $ T(R,M)$. Then there exists 
\begin{enumerate}[\normalfont (i)]
\item a Jordan biderivation $\delta$ of $R$,
\item a biadditive mapping $\beta:R\times M\rightarrow M$ which is a Jordan derivation in the first coordinate, and
\begin{equation*}
\beta(r,m\circ r')=\beta(r,m)\circ r'+m\circ \delta(r,r') 
\end{equation*}
for all $r,r'\in R$ and $m\in M$,
\item a biadditive mapping $\eta:M\times R\rightarrow M$ which is a Jordan derivation in the second coordinate, and
$$\eta(m \circ r',r)=\eta(m,r)\circ r'+m \circ \delta(r',r)$$
for all $r,r'\in R$ and $m\in M$,
\item and there exists a biadditive mapping $f:M\times M\rightarrow R$ which is a bimodule Jordan $ R-$homomorphism in each coordinate such that for all $r,r'\in R$ and $m,m'\in M$ we have
\begin{align*}
B((r,m),(r',m'))&=(\delta(r,r')+f(m,m'),\beta(r,m')+\eta(m,r')).
\end{align*}
\end{enumerate}

$ B $ is a Jordan biderivation of $ T(R,M) $. In paticular, every Jordan super-biderivation of the upper triangular matrix ring $ T $ is a Jordan biderivation.
\end{theorem}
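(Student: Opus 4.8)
The plan is to take $B$ apart along the $\ZZ_2$-grading of $T(R,M)$ and reduce everything to Theorem~\ref{2.1}. By definition, $B$ being a Jordan super-biderivation is the same as saying that, on freezing one argument, one obtains Jordan superderivations: for $r\in R$ the maps $X\mapsto B((r,0),X)$ and $X\mapsto B(X,(r,0))$ are Jordan superderivations of degree $0$, while for $m\in M$ the maps $X\mapsto B((0,m),X)$ and $X\mapsto\sigma(B(X,(0,m)))$ are Jordan superderivations of degree $1$. Applying Theorem~\ref{2.1} to each of these four families will simultaneously produce the data $\delta$, $\beta$, $\eta$, $f$ and essentially all of their asserted properties; the only thing that does not come for free is the conclusion that the resulting biadditive map is a Jordan biderivation, which is handled last.

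First I would freeze $(r,0)\in A_0$ in the first argument: by Theorem~\ref{2.1} the degree-$0$ Jordan superderivation $X\mapsto B((r,0),X)$ has the form $(r',m')\mapsto(\delta(r,r'),\beta(r,m'))$, where $r'\mapsto\delta(r,r')$ is a Jordan derivation of $R$ and $\beta(r,\cdot)$ satisfies the relation~\eqref{22}, which is the identity in~(ii). Freezing $(r',0)$ in the second argument and arguing the same way shows $r\mapsto\delta(r,r')$ is also a Jordan derivation and that $\eta(\cdot,r')$ satisfies the identity in~(iii), so $\delta$ is a Jordan biderivation of $R$, giving~(i). Next I would freeze $(0,m)\in A_1$ in the first argument: by Theorem~\ref{2.1} the degree-$1$ Jordan superderivation $X\mapsto B((0,m),X)$ has the form $(r',m')\mapsto(f(m,m'),\eta(m,r'))$ with $f(m,\cdot)$ a bimodule Jordan $R$-homomorphism satisfying~\eqref{21} and $\eta(m,\cdot)$ a Jordan derivation $R\rightarrow M$. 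Freezing $(0,m')\in A_1$ in the second argument, the map $X\mapsto\sigma(B(X,(0,m')))$ is a degree-$1$ Jordan superderivation, hence of the form $(r,m)\mapsto(\tilde f(m),\tilde\gamma(r))$; since $\sigma$ is an involutive automorphism fixing $A_0$ and negating $A_1$, applying $\sigma$ once more gives $B((r,m),(0,m'))=(\tilde f(m),-\tilde\gamma(r))$, so that $\tilde f(m)=f(m,m')$, $\beta(r,m')=-\tilde\gamma(r)$, the map $f(\cdot,m')$ is a bimodule Jordan $R$-homomorphism satisfying~\eqref{21}, and $\beta(\cdot,m')$ is a Jordan derivation. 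The four slices agree on their overlaps, and biadditivity of $B$ then yields $B((r,m),(r',m'))=(\delta(r,r')+f(m,m'),\beta(r,m')+\eta(m,r'))$, completing~(ii)--(iv).

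It remains to prove that this $B$ is a Jordan biderivation, i.e.\ that it satisfies $B(X\circ Y,Z)=B(X,Z)\circ Y+X\circ B(Y,Z)$ and the analogous identity in the second argument. I would substitute the explicit formula for $B$, expand both sides componentwise, and collect terms. The contributions built from $\delta$, $\beta$ and $\eta$ cancel term by term using that $\delta$ is a Jordan biderivation of $R$, the defining relations~(ii) and~(iii), and the commutativity of the Jordan product. What survives is a relation involving only values of $f$, of the shape $f(m,m')\circ m''$; to dispose of it I would return to the super-biderivation axioms~\eqref{11}--\eqref{12} and instantiate them on triples of odd elements. Since the product of two odd elements of $T(R,M)$ vanishes, these collapse to $m_1\circ f(m_2,m_3)=f(m_1,m_3)\circ m_2$ and $f(m_1,m_2)\circ m_3=m_2\circ f(m_1,m_3)$, which force $f(m,m')\circ m''$ to be totally symmetric in $m$, $m'$, $m''$; feeding this back is what handles the leftover $f$-terms. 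Finally, applying the whole statement to the trivial extension $T(R\times S,M)$ and transporting along the ring isomorphism~\eqref{14} gives the "in particular" assertion for $T$.

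The main obstacle is precisely this last step. The two Jordan-biderivation identities unfold into many terms, and---unlike the $\delta,\beta,\eta$ parts, which cancel purely formally---the $f$-part does not: it is exactly here that one must push the odd instances of~\eqref{11}--\eqref{12} hard, and this is the only point at which the special feature $A_1A_1=0$ of a trivial extension enters. I would take care to check that the residual $f$-terms genuinely disappear rather than merely simplify; should they not, a $2$-torsion hypothesis on $M$ (in the spirit of Corollary~\ref{2.2}) would be needed to conclude that $f(m,m')\circ m''$ contributes nothing.
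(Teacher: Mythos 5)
Your reduction of parts (i)--(iv) to Theorem \ref{2.1} by freezing one argument at a time is sound, and it is essentially the paper's own argument in a cleaner packaging: the paper expands \eqref{11} and \eqref{12} directly on homogeneous pairs, which amounts to the same computations, and your consistency checks between the four slices (including $\beta(\cdot,m')=-\tilde\gamma$, harmless since the negative of a Jordan derivation is again one) are correct.

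The genuine gap is the final claim that $B$ is a Jordan biderivation, and there your closing hedge, not the sentence before it, is the accurate assessment: total symmetry of $m\circ f(m',m'')$ does not make the leftover terms disappear. In $B(X\circ Y,Z)=B(X,Z)\circ Y+X\circ B(Y,Z)$ the surviving terms are $f(m_1,m_3)\circ m_2+f(m_2,m_3)\circ m_1$, and the all-odd instances of \eqref{11}--\eqref{12} give exactly $m_1\circ f(m_2,m_3)=f(m_1,m_3)\circ m_2$ and $f(m_1,m_2)\circ m_3=m_2\circ f(m_1,m_3)$, so the residue equals $2\,\bigl(m_1\circ f(m_2,m_3)\bigr)$: symmetric, but not zero in general. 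Concretely, on $T(\ZZ,\ZZ)$ the map $B((r,m),(r',m'))=(mm',0)$ (here $\delta=\beta=\eta=0$ and $f(m,m')=mm'$) satisfies the super-biderivation axioms but is not a Jordan biderivation, so the unconditional statement for an arbitrary trivial extension cannot be proved; what is needed is a hypothesis of the type $2\,(m\circ f(m',m''))=0$, exactly in the spirit of Corollary \ref{2.2}. (The paper supplies no argument at this point either, dismissing it as ``straightforward'', so you are not missing a trick that the paper provides.) For the ``in particular'' assertion about the triangular matrix ring, transporting along \eqref{14} as you propose does work, but only after you add the observation that in $T(R\times S,M)$ one has $f=0$: by the Jordan $R\times S$-homomorphism property in each slot and \eqref{13}, $(1,0)\circ m=m$ gives $f(m,m')=(1,0)\circ f(m,m')$, which forces $f(m,m')=0$ just as in the proof of Theorem \ref{2.3}; with $f=0$ the residual terms vanish and your cancellation closes for $T$.
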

\begin{proof}
(i) Let $r,r'\in R$, and let $B((r,0),(r',0))=(\delta(r,r'),0)$, for some mapping  $ \delta : R \times R \longrightarrow R $. Since $B$ is biadditive, so is $\delta$. Using \eqref{11}, for every $r_1,r_2,r'\in R$ we have
\begin{align*}
(\delta(r_1\circ r_2,r'),0)&=B((r_1\circ r_2,0),(r',0))=B((r_1,0)\circ_{s}(r_2,0),(r',0))\\
&=(r_1,0)\circ_{s}B((r_2,0),(r',0))+B((r_1,0),(r',0))\circ_{s}(r_2,0)\\&=(r_1,0)\circ (\delta(r_2,r'),(r',0))+(\delta(r_1,r'),0)\circ (r_2,0)\\\
&=(r_1\circ \delta(r_2,r')+\delta(r_1,r')\circ r_2,0).
\end{align*}
Therefore, $\delta$ is a Jordan derivation in the first coordinate. Similarily $\delta$ is Jordan derivation in the second coordinate.\\\indent (ii) Let $r\in R,m\in M$ be arbitrary, and set
$$B((r,0),(0,m))=(0,\beta(r,m))\in R\times M,$$
for some mapping $ \beta:R \times M \longrightarrow M $. Biadditivity of $\beta$ follows from $B$. Let $r'$ be also in $R$. Then, from
\begin{align*}
(0,\beta(r\circ r',m))&=B((r\circ r',0),(0,m))=B((r,0)\circ_{s}(r',0),(0,m))\\&=(r,0)\circ_{s}B((r',0),(0,m))+d((r,0),(0,m))\circ_{s}(r',0)\\
&=(r,0)\circ (0,\beta(r',m))+(0,\beta(r,m))\circ (r',0)\\&=(0,r\circ \beta(r',m)+\beta(r,m)\circ r')
\end{align*}
it follows that $\beta$ is a Jordan derivation in the first coordinate. Moreover using \eqref{12}, we have 
\begin{align*}
(0,\beta(r,r'\circ m))&=B((r,0),(0,r'\circ m))=B((r,0),(r',0)\circ_{s}(0,m))\\&=(r',0)\circ_{s}(B(r,0),(0,m))+B((r,0),(r',0))\circ_{s}(0,m)\\
&=(r',0)\circ (0,\beta(r,m))+(\delta(r,r'),0))\circ (0,m)\\&=(0,r'\circ \beta(r,m))+(0,\delta(r,r')\circ m)\\
&=(0,r' \circ \beta(r,m)+\delta(r,r') \circ m).
\end{align*}

(iii) The proofs of the existence of the mapping $\eta:M\times R\rightarrow M$ satisfying the properties given in (iii) is similar to that of $\beta$ in (ii), hence suppressed.

(iv) Let $m,m'\in M$ be arbitrary, and assume that $$B((0,m),(0,m'))=(f(m,m'),0)\in R\times M,$$ where $ f:M \times M \longrightarrow R $ is a mapping determined by $ B $. Obviously, $f$ is biadditive. Let $r$ be in $R$. Then, from
\begin{align*}
(f(r\circ m,m'),0)&=B((0,r\circ m),(0,m'))=B((r,0)\circ_{s}(0,m),(0,m'))\\&=(r,0)\circ_{s}(B(0,m),(0,m'))+B((r,0),(0,m'))\circ_{s}(0,m)\\
&=(r,0)\circ (f(m,m'),0)-(0,\beta(r,m'))\circ (0,m)\\&=(r\circ f(m,m'),0)
\end{align*}
it follows that $f(r\circ m,m')=r \circ f(m,m')$. In similar fashions, we can show that the other equalities in (iv) hold.

The last conclusion follows from biadditivity of $ B $ and (i)-(iv) above.

Finally, it is now striaghtforward to show that $ B $ is a Jordan biderivation of $ T(R,M) $, and the particular case follows from the isomorphism given in \eqref{14}. 
\end{proof}
\begin{remark}\label{2.7}
For any ring $ R $ and $ n \geqslant 2 $, the upper triangular matrix rings $ T_{n}(R) $ are examples of triangular matrix rings $ T=\left(\begin{array}{cc}R&M\\0&S\end{array}\right) $ by the ring isomorphism 
\begin{equation*}
T_{n}(R) \cong  \left(\begin{array}{cc}R&R^{n-1}\\0&T_{n-1}(R)\end{array}\right),
\end{equation*}
where $ R^{n-1} $ is considered as an $ (R,T_{n-1}(R))-$bimodule with the obvious scalar multiplications.
\end{remark}

The question given above suggests an analogue question on superbiderivations of $ \ZZ_{2}-$graded rings:
\\

\textbf{Question.} \textit{Under what conditions on a $ \ZZ_{2}-$graded ring, every Jordan super-biderivation is a Jordan biderivation?}
\\

Theorem \ref{2.6} answers the above question affirmatively for the trivial extension $ T(R,M) $, and hence for the upper triangular matrix ring $ T $ and $ T_{n}(R) $, without any additional condition.


\end{document}